\newcommand{\Fcal}{\mathcal{F}}
\newcommand{\Rcal}{\mathcal{R}}
\newcommand{\Wcal}{\mathcal{W}}
\newcommand{\Xcal}{\mathcal{X}}
\newcommand{\bX}{\mathbf{X}}
\newcommand{\Z}{\mathbb{Z}}
\newcommand{\Q}{\mathbb{Q}}
\newcommand{\N}{\mathbb{N}}
\newcommand{\A}{\mathbb{A}}
\newcommand{\del}{\delta}
\newcommand{\sig}{\sigma}
\newcommand{\la}{\lambda}
\newcommand{\om}{\omega}
\newcommand{\Om}{\Omega}
\newcommand{\ol}{\overline}
\newcommand{\br}{\vspace{3 mm}}
\newcommand{\Aut}{{\rm{Aut\,}}}
\newcommand{\id}{{\rm{id}}}
\newcommand{\Homeo}{\rm{Homeo}}
\theoremstyle{plain}
\newtheorem{thm}{Theorem}[section]
\newtheorem{prop}[thm]{Proposition}
\theoremstyle{definition}
\newtheorem{rmk}[thm]{Remark}
\newtheorem{question}[thm]{Question}
\begin{document}


\title[Rigid topologies on groups]
{Rigid topologies on groups}

\author[Eli Glasner]{Eli Glasner}
\address{Department of Mathematics,
Tel-Aviv University, Ramat Aviv, Israel}
\email{glasner@math.tau.ac.il}

\author[Benjamin Weiss]{Benjamin Weiss}
\address{Mathematics Institute, Hebrew University of Jerusalem,
Jerusalem, Israel}
\email{weiss@math.huji.ac.il}

%


\setcounter{secnumdepth}{2}



\setcounter{section}{0}


\begin{abstract}
Our main result is to show that 
every infinite, countable, residually finite group $G$ admits a Hausdorff
group topology which is neither discrete nor precompact.
\end{abstract}
 
%


\keywords{Residually finite groups, discrete topology, precompact topology, rigidity, uniform rigidity}

\thanks{{
AMS classification:
Primary 20E26; Secondary 37B20}}

\begin{date}
{December 14, 2022}
\end{date}

\maketitle


\setcounter{secnumdepth}{2}


\setcounter{section}{0}


\section*{Introduction}

Let $G$ be an infinite countable group.
It is an old problem to determine whether or not $G$ admits  a non-discrete Hausdorff topology
in which the group operations are continuous (i.e. a Hausdorff group topology).
It was shown by Olshanskii \cite{Ol-80} that there are infinite countable groups for which
the discrete topology is the only possible group topology. Such groups are called
{\em non-topologizable}.

Let $G$ be an infinite countable group. The collection of normal finite index sub-groups 
$N < G$, forms a basis of neighborhoods of the identity for a group topology on $G$ 
(i.e. a topology with respect to which $G$ is a topological group) called the {\em profinite topology}. 
This topology is Hausdorff  iff $G$ is residually finite. The completion of $G$ with respect to this
topology (or rather with respect to the corresponding uniform structure) is a compact topological group, 
hence the profinite topology is precompact.
%

Free groups, finitely generated nilpotent groups,
polycyclic-by-finite groups, finitely generated linear groups, and fundamental groups
of compact $3$-manifolds are all residually finite. More specifically, all the groups
$SL(n, \Z)$ are residually finite.
An example of a non-residually finite countable group is $\Q$ -- the group of rational numbers.
It is a divisible group and thus has no finite index subgroups.

In this note we show that every infinite, countable, residually finite group admits a
Hausdorff group topology which is neither discrete nor precompact. The main idea of
the proof is to construct a compact metric, free dynamical system $(X,G)$ such that
the topology on $G$ inherited from the Polish group $\Homeo (X)$ is the required topology.
The construction of the dynamical system $(X,G)$ is inspired by that of the examples in \cite{KW-81}.

A (topological) {\em $G$-dynamical system} is a pair $(X,G)$ where $X$ is a compact metric
space and $G$ acts on $X$ by homeomorphisms; i.e. via a homomorphism  $\theta$ from $G$ into
the Polish group $\Homeo (X)$ (equipped with the uniform convergence topology). We
say that the action of $G$ on $X$ is {\em uniformly rigid} when $\theta$ is injective and the topology
induced on $G$ via this homomorphism is not discrete, or in other words, when there
is a sequence $g_n \in G \setminus \{e\}$  such that the sequence $\theta(g_n)$ tends uniformly to $\id_X$, the
identity map on $X$ (see \cite{GM-89} for more details).
We call such a sequence {\em a rigidity sequence}.

On the way to proving our main result we also show that for amenable groups 
that are {\em maximally almost periodic} (maxAP for short) , i.e. admit an embedding into a compact topological group,
there is a Hausdorff group topology 
which is neither discrete nor precompact. For this we use tools from ergodic theory.

The term {\em rigid} originates in ergodic theory in the work of H. Furstenberg and the second author
\cite{FW-77} (see also \cite{W-72}).
The topological analogue was studied by the first author and D. Maon in \cite{GM-89}. 

\br

\section{The amenable case}

Let  $\Aut(\mu)$ be the group of automorphisms of the standard Lebesgue probability measure space 
$(X,\mu) = ([0,1], \mu)$. This group has a Polish topology induced by the strong operator topology on the 
unitary operators defined in $L^2(X,\mu)$ by elements of $\Aut(\mu)$.

A measure theoretical measure preserving $G$-dynamical system 
$\bX = (X, \Xcal, \mu, G)$, is defined by a homomorphism $\rho$  from $G$ into
 $\Aut(\mu)$. The system $\bX$ is {\em free} when for each $e \not=g \in G$ the set of $g$-fixed points has
measure zero. The system $\bX$ is {\em rigid} when $\rho$ is injective and the topology induced on
$\rho(G)$ from $\Aut(\mu)$ is not discrete; that is, there is a sequence $g_n \in G \setminus \{e\}$ such that the
sequence $\rho(g_n)$ tends to $\id_X$ in $\Aut(\mu)$.

\br

Recall that an infinite (discrete) group $G$ is {\em maximally almost periodic} (maxAP group for short) if there is
an injective homomorphism $\eta : G \to K$, with $K$ a compact Hausdorff topological group.
When such a monomorphism exists we can assume that $\eta(G)$ is dense in $K$.
It then follows that the dynamical system $(K,\la_K,G)$, with $\la_K$ being the Haar measure on $K$ 
and the action is defined by left multiplication via $\eta$,
is a free measure preserving, ergodic, rigid $G$-action.

\begin{prop}\label{amen}
Every infinite, countable, 
maximally almost periodic, amenable group $G$ admits a Hausdorff group topology 
which is neither discrete nor precompact.
\end{prop}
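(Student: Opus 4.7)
The plan is to construct a free, measure-preserving, weakly mixing, rigid $G$-action $\rho\colon G\to\Aut(\nu)$ on some standard Lebesgue probability space $(Y,\Ycal,\nu)$. Once this is in hand, the pullback through $\rho$ of the Polish topology on $\Aut(\nu)$ is a Hausdorff group topology on $G$ (Hausdorff, because freeness forces $\rho$ to be injective), it is non-discrete (by rigidity), and it is non-precompact. For the last point one uses weak mixing: if $\ol{\rho(G)}$ were compact in $\Aut(\nu)$, it would be a compact Polish group acting continuously on $(Y,\nu)$, so by Peter--Weyl its Koopman representation on $L^2_0(Y)$ would split into finite-dimensional summands, giving a non-trivial finite-dimensional $G$-invariant subspace of $L^2_0(Y)$ and contradicting weak mixing.

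The maxAP hypothesis is used to pin down a target rigidity sequence. Let $\eta\colon G\hookrightarrow K$ be a dense embedding into a compact Hausdorff group. Since $G$ is infinite, $K$ is infinite; as a Hausdorff compact topological group it has no isolated points, so every neighbourhood of $e\in K$ contains infinitely many $\eta(g)$ with $g\neq e$. Diagonalising along a countable cofinal family of neighbourhoods of $e$ in $K$ (say those cut out by finite sets of characters in $\widehat{K}$ separating the countable set $\eta(G)$), I extract a sequence $g_n\in G\setminus\{e\}$ whose Koopman operators on $L^2(K,\la_K)$ tend to $\id$ in the strong operator topology; this is precisely the rigidity sequence of the Kronecker system $(K,\la_K,G)$ already noted in the excerpt, and it is this same sequence that I will target in the construction of $(Y,\nu,G)$.

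Amenability is then exploited via the Ornstein--Weiss Rokhlin lemma to build $(Y,\nu,G)$ by cutting and stacking along a F\o lner sequence in $G$. At stage $n$ the plan is to present $\nu$-most of $Y$ as a Rokhlin tower over a shape $F_n\subset G$ that is sufficiently $(g_n,\varepsilon_n)$-invariant to place $\rho(g_n)$ within $\varepsilon_n$ of $\id_Y$ in $\Aut(\nu)$, while inserting Chacon-style non-uniform spacers across the columns so that the self-joining $(Y\times Y,\nu\times\nu,G)$ becomes ergodic in the limit, yielding weak mixing; freeness is arranged by making successive cuttings eventually separate any fixed non-identity element from the identity on a column of positive measure. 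The hard part is precisely this balancing: rigidity along a prescribed sequence $\{g_n\}$ and weak mixing pull in opposite directions -- mixing wants the Koopman orbit to escape every compact set, while rigidity demands repeated returns near the identity -- and it is the Chacon-type allocation of rigidifying layers for $g_n$ against inter-column spacers, promoted to arbitrary countable amenable $G$ by the Ornstein--Weiss machinery, that I expect to resolve the tension.
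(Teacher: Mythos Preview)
Your high-level strategy matches the paper's exactly: produce a free, rigid, weakly mixing measure-preserving $G$-action and pull back the Polish topology from $\Aut(\nu)$. Your justification that the resulting topology is Hausdorff (freeness), non-discrete (rigidity), and non-precompact (weak mixing, via Peter--Weyl on the would-be compact closure) is correct and is precisely the reasoning the paper uses.

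Where you diverge is in \emph{how} such an action is obtained. The paper does not construct one by hand. Instead it runs a Baire category argument in the Polish space $\A(G)$ of measure-preserving $G$-actions: free actions form a $G_\delta$, rigid actions form a $G_\delta$, and (for amenable $G$) weakly mixing actions form a dense $G_\delta$. The maxAP hypothesis is used only to exhibit a single free rigid action---the Kronecker system on $K$---and then amenability, via the Foreman--Weiss theorem that the conjugacy orbit of any free action is dense in $\A(G)$, upgrades $\Rcal$ to a dense $G_\delta$. Hence $\Rcal\cap\Wcal\cap\Fcal$ is residual, in particular nonempty, and any element of it does the job. No explicit construction, no balancing act.

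Your direct cutting-and-stacking route is a genuinely different approach, and the gap is exactly where you flag it: you say you ``expect'' the Ornstein--Weiss machinery with Chacon-type spacers to reconcile rigidity along a prescribed sequence with weak mixing, but you do not carry this out, and this is not a routine verification for a general countable amenable $G$. Indeed, the paper later records as open Samet's question of whether \emph{every} countable amenable group (with no maxAP assumption) admits a free rigid weakly mixing action---which is exactly the statement your construction would prove if it went through without ever invoking maxAP beyond selecting a target sequence. The Baire category argument trades an explicit example for a soft genericity statement, and in doing so bypasses entirely the delicate step your proposal leaves unresolved.
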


\begin{proof}
Consider the Polish space $\A(G)$ of measure preserving $G$-actions,
which consists of the representations of $G$ in $\Aut (\mu)$ (for more details see e.g. \cite[Chapter II]{Ke-10}).
The group $\Aut(\mu)$ acts by conjugations on $\A(G)$, and when $G$ is amenable the orbit of any free
action is dense in $\A(G)$ (see \cite{FoW-04}).

It is well known that (for an arbitrary infinite countable group $G$) the
set $\Fcal$ of free actions forms a $G_\del$ subset of $\A(G)$ (see \cite{GK-98}). It is also well known 
(and easy to see) that (again for every infinite countable $G$) the set $\Rcal$ of rigid actions is a 
$G_\del$ subset of $\A(G)$ (see e.g. \cite{Sa-09}). 

As observed above, since we assume that $G$ is infinite and 
maximally almost periodic,
$G$ admits a natural rigid free measure preserving action, namely 
the action on the compact embedding which it admits by definition.

It follows that the set $\Rcal$ is nonempty, hence by the fact that, for amenable $G$, the
orbit of an ergodic free $\rho \in \A(G)$ is dense in $\A(G)$, it is a dense $G_\del$ subset of $\A(G)$.
Finally, as we assume that $G$ is amenable, the set $\Wcal$, of weakly mixing actions, forms
a dense $G_\del$ subset of $\A(G)$ (again see \cite{Ke-10}).
It then follows that the collection $\Rcal \cap \Wcal \cap \Fcal$ is nonempty and for each element $\rho$ of
this set the topology inherited from $\Aut(\mu)$ onto $\A(G)$ yields the required topology
on $G$. Indeed, the fact that $\rho$ is free implies that this topology is Hausdorff (in fact
metrizable), the fact that  $\rho$ is rigid implies that this topology is not discrete, and the
fact that  $\rho$ is weakly mixing implies that this topology is not precompact. Finally, as
it is inherited from $\Aut(\mu)$, it is clearly a group topology.
\end{proof}

\br

\begin{rmk}

The same proof will work for any infinite, countable, 
maxAP group $G$ for which:
\begin{enumerate}
\item
The orbit of a
free precompact action  is dense in $\A(G)$.
\item
$\Wcal$ is residual in $\A(G)$.
\end{enumerate}
By a result of Kerr and Pichot, \cite{KP-08} (2) holds iff $G$ does not have Kazhdan's property (T). 
In  \cite{Ke-12} Kechris shows that (1) holds for the free groups $F_n$.
\end{rmk}

\br

\section{The main theorem}

\begin{thm}\label{main}
Every infinite, countable, residually finite group $G$ admits a Hausdorff
group topology which is neither discrete nor precompact.
\end{thm}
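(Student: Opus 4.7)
The plan is to construct a compact metric free topological $G$-dynamical system $(X,G)$ that is uniformly rigid but not equicontinuous; the resulting homomorphism $\theta:G\to\Homeo(X)$ induces a subspace topology on $G$ which is automatically Hausdorff (since $\theta$ is injective by freeness), non-discrete (by uniform rigidity), and non-precompact (since, by a standard fact for Polish group actions on compact metric spaces, precompactness of $\theta(G)$ in $\Homeo(X)$ forces equicontinuity of the action).

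I would first use residual finiteness to fix a decreasing sequence of normal finite-index subgroups $G=N_0\supset N_1\supset N_2\supset\cdots$ with $\bigcap_n N_n=\{e\}$, together with an exhaustion $e\in F_1\subset F_2\subset\cdots$ of $G$ by finite symmetric sets. Left translation of $G$ on the profinite completion $\hat G=\varprojlim G/N_n$ is already a free, uniformly rigid action, but it is equicontinuous, so by itself it is insufficient; however it provides a natural target for a maximal equicontinuous factor map.

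The construction would then produce $X$ as the orbit closure of a generalized Toeplitz point $x\in A^G$ over a finite alphabet $A$, built by induction on $n$. At stage $n$ one extends the partial definition of $x$ so that (a) $x$ becomes $N_n$-right-periodic on $F_n$, which, combined with normality of $N_n$ and a limit argument, ensures that a well-chosen $g_n\in N_n\setminus\{e\}$ shifts every point of the orbit closure within a distance depending only on $F_n$, yielding uniform rigidity with rigidity sequence $(g_n)$; (b) on the new region $F_n\setminus F_{n-1}$ one fills in sufficiently complex aperiodic decoration so that the natural factor map $X\to\hat G$ has non-singleton fibres, making $(X,G)$ a non-trivial almost one-to-one extension of $\hat G$, hence not equicontinuous; (c) one diagonalizes over the countable collection of "forbidden symmetries" --- pairs $(g,w)$ with $g\in G\setminus\{e\}$ and $w\in G$ --- planting distinguishing patterns in the decoration so that no nontrivial $g\in G$ fixes any point of $\overline{G\cdot x}$. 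This is carried out in the spirit of \cite{KW-81}. Once these three properties hold, verifying that $\theta$ gives the required group topology is essentially bookkeeping.

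The main obstacle is the simultaneous balancing of the three competing demands on the construction: uniform rigidity requires much periodicity, non-equicontinuity requires sufficient aperiodic complexity spread across all scales, and freeness of the entire orbit closure (as opposed to just of the seed $x$) requires controlling all potential symmetries that might arise in limit points. The delicate step is (c): ensuring that the freeness arranged for $x$ is inherited by every $y\in X=\overline{G\cdot x}$. This is achieved by an inductive enumeration that, at stage $n$, uses the freedom in the yet-undetermined cosets of $N_n$ to kill the $n$-th candidate fixing relation uniformly along the orbit, while preserving both the $F_n$-periodicity and the decoration needed for non-equicontinuity --- a balance made possible because $[G:N_n]<\infty$ while $G$ is infinite, so each stage has infinitely many "free" cosets available to accommodate all constraints.
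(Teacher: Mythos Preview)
Your overall strategy --- build a faithful, uniformly rigid, non-equicontinuous compact metric $G$-system and pull back the $\Homeo(X)$ topology --- is exactly the paper's plan, and your reduction of ``non-precompact'' to ``non-equicontinuous'' is correct. The problem is the implementation. A subshift $X\subset A^G$ over a \emph{finite} alphabet is expansive, and an expansive system cannot be both faithful and uniformly rigid, at least whenever $G$ has nontrivial centre (in particular for $G=\Z$, which is residually finite). Indeed, if $\epsilon_0$ is an expansivity constant and $\sup_{y}d(\sigma_{g_n}y,y)<\epsilon_0$, then for any $g$ commuting with $g_n$ one has $d(\sigma_g\sigma_{g_n}y,\sigma_g y)=d(\sigma_{g_n}\sigma_g y,\sigma_g y)<\epsilon_0$, and expansiveness forces $\sigma_{g_n}y=y$ for all $y$, contradicting faithfulness. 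Even setting this aside, the Toeplitz mechanism you describe does not produce \emph{uniform} rigidity: if $x$ is $N_n$-right-periodic only on $F_n$, then for a translate $\sigma_h x$ the relevant window is $F_n h$, not $F_n$, and normality of $N_n$ does not rescue you --- to get $(\sigma_{g_n}\sigma_h x)|_{F_n}=(\sigma_h x)|_{F_n}$ you would need $x$ to be $N_n$-periodic on all of $F_n\cdot G=G$, i.e.\ fully periodic. This is precisely why the paper works in $[0,1]^G$ rather than over a finite alphabet: the ``pyramid'' functions $\phi_n$ yield Lipschitz-type estimates $|\phi_n(gs)-\phi_n(g)|<10^{-(n-1)}$ for $s\in S_n$, and these propagate through $\psi_n=\min(\psi_{n-1},\phi_n)$ and the limit $\xi$, giving uniform rigidity without any periodicity of $\xi$ itself.

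A second, smaller point: you make freeness of the action on all of $X$ the ``main obstacle'', but this is stronger than needed and in fact does not hold in the paper's construction --- the constant configuration $\mathbf{0}$ lies in $X$ and is fixed by every $g$. What is required for Hausdorffness is merely that $\theta$ be injective (faithfulness), and in the paper this comes almost for free: $\xi(e)=1$ while $\xi(g)<1$ for every $g\neq e$ (since $\xi(g)=1$ would force $g\in\bigcap_n H_n=\{e\}$), so $\sigma_g\xi\neq\xi$. Correspondingly, the paper's route to non-equicontinuity is not via non-trivial fibres over $\hat G$ but via non-minimality: the unique minimal subset of $X$ is $\{\mathbf{0}\}$, so the point-transitive system $(X,G)$ is not minimal, hence not equicontinuous. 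This sidesteps entirely the delicate balancing act you describe in~(c). Finally, note that the paper treats locally finite $G$ separately (via the amenable/maxAP argument of Section~1), since in that case no finite $S_1$ generates an infinite subgroup and the pyramid construction would collapse; your sketch should account for this bifurcation as well.
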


\begin{proof}

{\bf Case I: $G$ is locally finite.}

Clearly,  every countable locally finite group $G$ is a countable union of an increasing 
sequence of finite groups which then form a F{\o}lner sequence for $G$.
Thus such a group is amenable
{\footnote{Note that a  countable locally finite group need not be residually finite.
E.g. the group $A_\infty(\N) < S_\infty(\N)$ consisting of the alternating 
finitely supported permutations of $\N$, is both locally finite and simple.}}.
Since we assume that $G$ is residually finite, its pro-finite completion
shows that it is maxAP and  our claim follows from Proposition \ref{amen}.

\br

{\bf Case II: $G$ is not locally finite.}
 
Let $\{B_n\}_{n \in \N}$ be a sequence of finite symmetric sets such that
 \begin{enumerate}
\item
$\{e\} = B_0 \subset B_1 \subset B_2 \subset \cdots \subset B_n \subset B_{n+1} \subset \cdots$
\item
$\bigcup_{n \in \N} B_n = G$.
 \end{enumerate}

Let $\Om =  [0,1]^G$  and define the action of $G$ on $\Om$, as usual, by
$$
(\sig_h \om)(g) = \om(gh), \qquad g,h \in G.
$$
We will construct inductively periodic sequences of elements $\phi_n, \psi_n \in \Om$, 
an element $\xi = \lim \psi_n \in \Om$ and a sequence $g_n \in G$
such that, for the action of $G$ restricted to the
subsystem $X = \ol{G\xi}$, the sequence $g_n$ will tend uniformly to $\id_X$. 
We will then show that the uniform convergence topology induced on $G$, considered as a subset of
$\Homeo (X)$, satisfies the required properties.

\br

{\bf Step 1:}

Let $S_1$ be a symmetric finite set containing $e$, the unit element of $G$, such that $\langle S_1 \rangle$,
the subgroup generated by $S_1$, is infinite. Let $H_1$ be a finite index normal subgroup
of $G$ such that
$$
S_1^3 \cap H_1 = \{e\}.
$$
Define $\phi_1 : G \to [0,1]$ by
$$
\phi_1(g) =
\begin{cases}
1 & g \in S_1 H_1\\
0 & g \not \in S_1 H_1.
\end{cases}
$$
Clearly $\phi_1(gh) = \phi_1(g)$ for all $g \in G$ and $h \in H_1$.

\br

{\bf Step 2:}

Let $F_1$ be a finite symmetric subset in $H_1 \setminus \{e\}$,
and set $S_2 = S_1\cup F_1 \cup B_1$.
Let $H_2 < H_1$ be a finite index normal subgroup of $G$ such that
$$
S_2^{30} \cap H_2 = \{e\}.
$$

Define $\phi_2 : G \to [0,1]$ by
$$
\phi_2(g) =
\begin{cases}
1 & g \in H_2\\
\frac{10}{11} & g \in (S_2 \setminus \{e\}) H_2\\
\frac{9}{11} & g \in (S^2_2 \setminus S_2) H_2\\
\frac{8}{11} & g \in (S^3_2 \setminus S^2_2) H_2\\
 &\cdots\\
\frac{1}{11} & g \in (S^{10}_2 \setminus S_2^9) H_2\\
0 & g \not \in S_2^{10} H_2.
\end{cases}
$$
Note that since $\langle S_1 \rangle$ is infinite, all of the sets $S_2^{k+1} \setminus S_2^k$ are non-empty.

With this definition we clearly have, for every $s \in S_2$ (and in particular $s \in F_1$)
and $g \in G$
$$
| \phi_2(gs) - \phi_2(g) | < \frac{1}{10}.
$$
Next set
$$
\psi_2 = \min(\phi_1, \phi_2)
$$
and observe that also
\begin{equation}\label{ine1}
| \psi_2(gs) - \psi_2(g) | < \frac{1}{10}
\end{equation}
for all $s \in F_1$ and $g \in G$.
Clearly $\psi_2(gh) = \psi_2(g)$ for all $g \in G$ and $h \in H_2$.

\br

{\bf Step 3:}

Fix now $F_2$,  a finite symmetric subset in $H_2 \setminus \{e\}$, and set $S_3 = S_2 \cup F_2 \cup B_2$.
Let $H_3 < H_2$ be a finite index normal subgroup of $G$ such that
$$
S_3^{300} \cap H_3 = \{e\}.
$$
Define
$\phi_3 : G \to [0,1]$ by
$$
\phi_3(g) =
\begin{cases}
1 & g \in H_3\\
\frac{100}{101} & g \in (S_3 \setminus \{e\}) H_3\\
\frac{99}{101} & g \in (S^2_3 \setminus S_3) H_3\\
\frac{98}{101} & g \in (S^3_3 \setminus S^2_3) H_3\\
 &\cdots\\
\frac{1}{101} & g \in (S^{100}_3 \setminus S_3^{99}) H_3\\
0 & g \not \in S_3^{100} H_3.
\end{cases}
$$
Clearly $\phi_3(gh) = \phi_2(g)$ for all $g \in G$ and $h \in H_3$.

Now for $s \in S_3$ and $g \in G$,
\begin{equation}\label{ine2}
| \phi_3(gs) - \phi_3(g) | < \frac{1}{100},
\end{equation}
and with
$$
\psi_3 = \min(\psi_2,\phi_3)
$$
we get
$$
| \psi_3(gs) - \psi_3(g) | <  \frac{1}{100},
$$
for $s \in F_2$, since $F_2 \subset H_2$, $\psi_2$ is $H_2$ invariant and $F_2 \subset S_3$.

Furthermore, the inequality 
\begin{equation*}
| \psi_3(gs) - \psi_3(g) | < \frac{1}{10} +  \frac{1}{100},
\end{equation*}
holds for 
$s \in F_1$ since $F_1 \subset S_3$,
and $\phi_3$ satisfies (\ref{ine2}) while, $\psi_2$ satisfies (\ref{ine1}).

\br

Now the general picture is as follows: The normal finite index subgroups
$$
G = H_0 >  H_ 1 >  H_2  >  \cdots  >  H_n >  H_{n+1} > \cdots
$$
are built inductively together with the finite sets $F_n  \subset H_n \setminus \{e\}$. 
The functions  $\psi_n$ will satisfy, for
all $s \in  F_i$ with $i < n$:
$$
|\psi_n(gs) - \psi_n(g) | <\frac{1}{10^i} +    \cdots +  \frac{1}{10^n}.
$$
Hence, this will propagate to the limit of the monotone decreasing sequence
$$
\xi = \lim \psi_n.
$$
Note that for all $n$, $\psi_n(0) =1$, hence also $\xi(0)=1$.

\br

As promised we now define $X$  to be the orbit closure of $\xi \in \Om$ under $G$:
$$
X = \ol{ \xi G}
$$
and conclude that the dynamical system $(X,G)$ is uniformly rigid.
In fact, every sequence 
$\{s_n\}_{n \in \N}$ with $s_n \in F_n$ will converge uniformly on $X$ to the identity.

\br

Next let us observe that the configuration ${\bf{0}} \in \Om$ 
 belongs to $X$ and that the set $\{{\bf{0}}\}$ is
the unique minimal subset of $X$. This follows from the fact that for every $n$ we have
 $H_n \cap S_n^{3 \times 10^n} = \{e\}$,
which means that when we define $\psi_n$ it will vanish on a syndetic
set of translates of $B_{n-1}$. By the definition of  $\psi_{n+1}$, as minimum of  $\psi_n$ and $\phi_{n+1}$, these
zero regions will persist in $\xi$.

\br

To finish the proof we now observe that if the topology induced on $G$ from $\Homeo (X)$
were precompact, then the system $(X,G)$ would have been equicontinuous, and being
by definition point-transitive, it would have necessarily been minimal.
\end{proof}

\br

\begin{rmk}
The existence of non-topologizable infinite countable groups (see \cite{Ol-80}
and, the more recent \cite{KOO-13}) shows that the claim of the theorem can not 
hold for all infinite countable groups.
\end{rmk}


\begin{question}
Is there an infinite countable maxAP group $G$ such that
every non-discrete Hausdorff topology on $G$ is precompact?
\end{question}

\br

\begin{prop}
A finitely generated maxAP group $G$ is residually finite.
\end{prop}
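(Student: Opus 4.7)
The plan is to reduce to Mal'cev's classical theorem that every finitely generated linear group is residually finite, after first using the Peter--Weyl approximation of a compact group by compact Lie groups to enter a linear setting.

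First, I would fix an injective homomorphism $\eta : G \to K$ with $K$ a compact Hausdorff topological group and $\eta(G)$ dense in $K$, and pick an arbitrary $g \in G \setminus \{e\}$. Since $K$ is Hausdorff and $\eta(g) \neq e_K$, there is an open neighborhood $U$ of $e_K$ with $\eta(g) \notin U$. By the structure theorem for compact Hausdorff groups, every neighborhood of the identity contains a closed normal subgroup $N$ such that $L := K/N$ is a compact Lie group; equivalently, $K \cong \varprojlim K/N$ through such subgroups. Choose such an $N \subset U$, let $\pi : K \to L$ be the quotient map, and observe that $\pi(\eta(g)) \neq e_L$.

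Next, every compact Lie group admits a faithful continuous linear representation, so one may view $L \hookrightarrow GL_n(\C)$ (indeed inside $U(n)$) for some $n$. The composition $\rho := \pi \circ \eta : G \to GL_n(\C)$ then satisfies $\rho(g) \neq I$. Since $G$ is finitely generated, $\rho(G)$ is a finitely generated subgroup of $GL_n(\C)$, so by Mal'cev's theorem it is residually finite. In particular there is a finite-index normal subgroup $M \triangleleft \rho(G)$ with $\rho(g) \notin M$, and then $\rho^{-1}(M)$ is a finite-index normal subgroup of $G$ not containing $g$. As $g$ was arbitrary, $G$ is residually finite.

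The main step is the reduction from an arbitrary compact group to a compact Lie group; this is where the finite generation of $G$ plays no direct role but where one essentially needs Peter--Weyl. Once inside a linear group, Mal'cev takes over, and the finite generation of $G$ is exactly what is needed to invoke it. Neither ingredient is avoidable: without finite generation one cannot expect Mal'cev, and without the Lie-group reduction one has no access to a linear representation to feed into it. The plan is short precisely because both inputs are classical black boxes; the content of the \emph{proposition} is really the observation that they combine cleanly.
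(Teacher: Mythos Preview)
Your proof is correct and follows essentially the same route as the paper: both use Peter--Weyl to pass from the ambient compact group to a compact Lie quotient separating a given nontrivial element, then invoke Mal'cev's theorem on the finitely generated linear image. Your write-up is slightly more explicit (naming Mal'cev and spelling out the faithful linear representation of the Lie quotient), but the argument is the same.
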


\begin{proof}
Suppose $G$ is a subgroup of the compact group $K$.
By the Peter-Weyl theorem $K$ is the inverse limit of compact Lie groups (connected or not).
Hence, for any element $g \in G$ there is some Lie group $K_0$ and a homomorphism $\pi :K \to K_0$ such
that $\pi(g)$ is not the identity. Now the image of $G$ under $\pi$ is
a finitely generated linear group and hence is residually finite,
so that there is a homomorphism $\phi$ from $\pi(G)$ to a finite group $L$
such that $\phi(\pi(g))$ is not the identity. This shows that
$G$ is residually finite.
\end{proof}

\br

\begin{prop}
An infinite countable group $G$ that admits an embedding into a compact totally disconnected group $K$
 is residually finite.
\end{prop}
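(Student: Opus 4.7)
The plan is to exploit the fact that a compact totally disconnected (Hausdorff) group is profinite, so that it admits a neighborhood basis of the identity consisting of open normal subgroups of finite index. This is van Dantzig's theorem: in any locally compact totally disconnected group there is a neighborhood basis of $e$ made of compact open subgroups, and in the compact case one can further intersect finitely many conjugates to get open normal subgroups (which are automatically of finite index since $K$ is compact).

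Granted this, I would argue as follows. Identify $G$ with its image in $K$, and fix an arbitrary $g \in G \setminus \{e\}$. Since $K$ is Hausdorff, there is an open neighborhood $U$ of $e$ in $K$ with $g \notin U$. By the above structure result, choose an open normal subgroup $N \trianglelefteq K$ with $N \subset U$; in particular $g \notin N$. Being open in the compact group $K$, the subgroup $N$ has finite index in $K$. Consequently $N_G := G \cap N$ is a normal subgroup of $G$ of finite index (the map $G/N_G \hookrightarrow K/N$ is injective into a finite set), and $g \notin N_G$. Since $g$ was arbitrary, the intersection of all finite-index normal subgroups of $G$ is trivial, so $G$ is residually finite.

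The only non-trivial ingredient here is van Dantzig's theorem; the rest is a direct intersection argument. I would either cite van Dantzig or, if a self-contained proof is preferred, sketch it: pick any compact open neighborhood $V$ of $e$ (which exists by total disconnectedness plus compactness), use continuity of multiplication and inversion to shrink it to a compact open subgroup $H$, and then replace $H$ by $\bigcap_{k \in K} kHk^{-1}$, a finite intersection since $K/H$ is finite, producing an open normal subgroup contained in $U$. This is the step where one must be slightly careful, but it is standard, so I do not expect any real obstacle.
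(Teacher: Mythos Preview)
Your proof is correct and follows essentially the same route as the paper: both invoke van Dantzig's theorem to obtain a basis of open normal subgroups $N \trianglelefteq K$ of finite index, and then observe that the resulting finite quotients $K/N$ separate points of $G$, which is exactly your argument that $G \cap N$ is a finite-index normal subgroup of $G$ missing a given $g \neq e$. The paper simply cites van Dantzig rather than sketching it, but the substance is identical.
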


\begin{proof}
Suppose $G$ is a subgroup of the compact totally disconnected group $K$.
By van Dantzig's theorem \cite{vD-36} the collection of normal compact open subgroups of $K$
forms a basis for the topology at the identity of $K$. If $N$ is such a group then 
$K/N$ is a finite group and, as the maps $\pi_N : K \to K/N$ separate points on $G$,
it follows that $G$ is residually finite.
\end{proof}

\br

\section{Some further remarks}

The possibility to choose, in the proof of Theorem \ref{main}, the finite sets $F_n \subset H_n$
arbitrarily, results in great flexibility when one considers further properties of the
generated topology. This flexibility is demonstrated in the following two propositions.

\br

\begin{prop}
In the context of Theorem \ref{main} suppose that $g_0 \in G$ is an element
of infinite order. Then it can be arranged that there is a sequence $n_k \in \N$
 such that, in the constructed topology, the sequence $g_0^{n_k}$
will tend uniformly to the identity.
\end{prop}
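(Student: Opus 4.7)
The plan is to exploit the flexibility left open in the inductive construction from the proof of Theorem \ref{main}. At stage $n$ of that argument, once the finite-index normal subgroup $H_n$ has been chosen, one is completely free to pick the finite symmetric set $F_n\subset H_n\setminus\{e\}$ before forming $S_{n+1}=S_n\cup F_n\cup B_n$ and selecting $H_{n+1}<H_n$ as a finite-index normal subgroup of $G$ satisfying $S_{n+1}^{N_{n+1}}\cap H_{n+1}=\{e\}$ (for the appropriate constant $N_{n+1}$). The conclusion drawn there was that any sequence $\{s_n\}_{n\in\N}$ with $s_n\in F_n$ tends uniformly on $X$ to $\id_X$, so it suffices to arrange that, for a suitable $n_k$, the element $g_0^{n_k}$ lies in $F_k$.

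First I would observe that because $g_0$ has infinite order while $H_n$ has finite index in $G$, the image of $g_0$ in the finite quotient $G/H_n$ has some finite order $d_n\geq 1$. Hence $g_0^{d_n}\in H_n$, and $g_0^{d_n}\neq e$ because $g_0$ has infinite order. Setting $n_k=d_k$ (or any positive multiple), we obtain $g_0^{n_k}\in H_k\setminus\{e\}$ for every $k$.

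The next step is to enlarge $F_k$ throughout the construction so that it contains the symmetric pair $\{g_0^{n_k},g_0^{-n_k}\}$. This preserves all the properties of $F_k$ required in the proof of Theorem \ref{main}: it remains a finite symmetric subset of $H_k\setminus\{e\}$. The induction then proceeds unchanged, since residual finiteness of $G$ still provides a finite-index normal subgroup $H_{k+1}<H_k$ avoiding the enlarged (but still finite) set $S_{k+1}^{N_{k+1}}\setminus\{e\}$. Consequently the sequence $s_k:=g_0^{n_k}$ satisfies $s_k\in F_k$ for every $k$, and by the uniform rigidity portion of Theorem \ref{main} we conclude that $g_0^{n_k}$ tends to $\id_X$ uniformly on $X$, as required.

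There is essentially no obstacle here: the substance of the argument was already encoded in the proof of Theorem \ref{main}, and the only new input is the elementary fact that the powers of any infinite-order element must intersect each finite-index subgroup non-trivially. The propagation estimates $|\psi_n(gs)-\psi_n(g)|<\frac{1}{10^i}+\cdots+\frac{1}{10^n}$ for $s\in F_i$ are unaffected by the enlargement, so no quantitative adjustment is needed.
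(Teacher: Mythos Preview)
Your proof is correct and follows essentially the same approach as the paper: use the pigeonhole/finite-index argument to find a nontrivial power of $g_0$ in each $H_n$, then include that power (and its inverse) in $F_n$ so that the rigidity conclusion of Theorem~\ref{main} applies directly. Your version supplies a bit more detail (the symmetry of $F_k$, the fact that the inductive estimates are unaffected), but the idea is identical.
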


\begin{proof}
 If $H$ is a finite index normal subgroup of $G$ and $g_0$ has infinite order then some
power of $g_0$ lies in $H$. In fact, since there are only finitely many $H$-cosets for some
$k < m$ we have $g_0^k H = g_0^m H$
and then $g_0^{m-k}$
lies in $H$. Thus, in the proof of Theorem \ref{main}
we can choose, for each $n$ some $k_n$ so that $g_0^{k_n}$
lies in $H_n$, and include it in $F_n$.
 \end{proof}

\br

In \cite{AGW-07} the notion of {\em cones} was defined for a finitely generated group.
They were defined in order to provide `a sense of direction' in a finitely generated group.
For more details on these objects we refer the reader to \cite{AGW-07}.

\begin{prop}
 Let $G$ be an infinite, countable, residually finite, finitely generated group. 
 Then, for any list of countably many cones $C_i, \  i =1,2,\dots,$ there is a non-discrete, 
 not precompact, Hausdorff group topology on $G$ such that for every $i$ the cone
$C_i$ contains a rigidity sequence.
 \end{prop}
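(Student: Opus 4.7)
The plan is to run the inductive construction from the proof of Theorem \ref{main} essentially verbatim, but, as in the preceding proposition about $g_0^{n_k}$, to arrange at each stage that the set $F_n \subset H_n \setminus \{e\}$ contains a designated element drawn from one of the given cones. The crucial observation is that in the proof of Theorem \ref{main} \emph{every} sequence $\{s_n\}_{n \in \N}$ with $s_n \in F_n$ converges uniformly on $X$ to $\id_X$; so if we plant into the $F_n$ elements from each cone $C_i$ infinitely often, we get, after passing to an appropriate subsequence, a rigidity sequence inside each $C_i$ simultaneously.

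Concretely, first enumerate the cones with infinite multiplicity, i.e.\ fix a surjection $\sigma : \N \to \N$ with $\sigma^{-1}(i)$ infinite for every $i$. Now carry out the construction of Theorem \ref{main}: at stage $n$, after $H_n$ has been fixed but before $\phi_{n+1}$ and $H_{n+1}$ are chosen, select an element $t_n \in C_{\sigma(n)} \cap (H_n \setminus \{e\})$ and enlarge the symmetric set $F_n$ by adjoining $\{t_n, t_n^{-1}\}$. Then proceed exactly as before: set $S_{n+1} = S_n \cup F_n \cup B_n$, choose the next normal subgroup $H_{n+1}$ of finite index small enough that $S_{n+1}^{3 \cdot 10^{n+1}} \cap H_{n+1} = \{e\}$, define $\phi_{n+1}$ and $\psi_{n+1} = \min(\psi_n, \phi_{n+1})$ by the same recipe. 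All the estimates of the main theorem survive the enlargement of $F_n$ without change, and the limit configuration $\xi = \lim \psi_n$ and the subsystem $X = \overline{\xi G}$ are defined just as before.

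By the main theorem's argument, the sequence $\{t_n\}_{n \in \N}$ converges uniformly on $X$ to $\id_X$. For each fixed $i$ the subsequence $(t_n)_{n \in \sigma^{-1}(i)}$ lies entirely in $C_i$ and inherits this uniform convergence, so it is a rigidity sequence contained in $C_i$. The verification that the resulting topology on $G \hookrightarrow \Homeo(X)$ is Hausdorff, non-discrete, and not precompact is word-for-word the argument closing the proof of Theorem \ref{main} (freeness of the $G$-action on $\overline{\xi G}$, existence of the rigidity sequences just constructed, and non-minimality of $X$ witnessed by the point $\mathbf{0}$).

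The one place where the structure of cones enters, and the main (indeed only) obstacle, is the verification that for every cone $C \subset G$ and every finite index normal subgroup $H \triangleleft G$ the intersection $C \cap H$ is non-empty (in fact infinite), so that the elements $t_n$ can actually be chosen at every stage. This is the property of cones that has to be extracted from \cite{AGW-07}; once granted, the plan above executes without further difficulty.
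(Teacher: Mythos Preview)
Your proposal is correct and follows essentially the same approach as the paper. The one point you defer to \cite{AGW-07} --- that every cone meets every finite-index subgroup $H_n$ --- is supplied in the paper by the single observation that every cone contains arbitrarily large balls (hence meets any syndetic set, in particular any finite-index subgroup).
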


\begin{proof}
Every cone contains arbitrarily large balls and thus meets every subgroup $H_n$,
as in the proof of Theorem \ref{main}. Thus it is possible to choose the finite sets $F_n  \subset H_n$
in such a way that for every $i,\  F_n \cap  C_i \not= \emptyset$ infinitely often.
\end{proof}

\br

The following question is raised in \cite{Sa-09}: 

\begin{question}\label{sam}
Given a infinite countable discrete amenable group $G$, does there exist a free 
action of $G$ on a Lebesgue probability space which is both rigid and weakly mixing?
\end{question}

As far as we know this question is still open.
Note that a positive answer would show that every infinite countable discrete amenable group
admits a topology which is neither discrete nor precompact, so in particular, topologizable.
Denis Osin informed us that as far as he knows, there is no known example of a 
non-topologizable, countable, amenable group.

We also note that in the proof of Proposition \ref{amen} we get the required topology from a measure preserving system.
However, the existence of such a topology does not imply that 
there is a measure preserving system which is rigid. Thus exhibiting a non-discrete 
non-precompact topology for an amenable group will not automatically answer 
Question \ref{sam}.

\br

%
%

\br

\end{document}